\newtheorem{theo}{Theorem}[section]
\newtheorem{lemm}[theo]{Lemma}
\newtheorem{conj}[theo]{Conjecture}
\theoremstyle{remark}
\theoremstyle{remark}
\theoremstyle{remark}
\theoremstyle{remark}
\begin{document}

\title{Counting conjugacy classes of subgroups of ${\rm PSL}_2(p)$}

\author{Gareth A.~Jones}

\address{School of Mathematical Sciences, University of Southampton, Southampton SO17 1BJ, UK}
\email{G.A.Jones@maths.soton.ac.uk}


\subjclass{Primary 20D99; 
secondary 11N32, 
20D60, 
20E32, 
20G40.} 

\keywords{Projective special linear group, conjugacy classes of subgroups, self-normalising subgroups, Bateman--Horn Conjecture.}

\begin{abstract}
We obtain formulae for the numbers of isomorphism and conjugacy classes of non-identity proper subgroups of the groups $G={\rm PSL}_2(p)$, $p$ prime, and for the numbers of those conjugacy classes which do or do not consist of self-normalising subgroups. The formulae are used to prove lower bounds $17$, $18$, $6$ and $12$ respectively satisfied by these invariants for all $p>37$. A computer search carried out for a different problem shows that these bounds are attained for over a million primes $p$; we show that if the Bateman--Horn Conjecture is true, they are attained for infinitely many primes. Also, assuming no unproved conjectures, we use a result of Heath-Brown to obtain upper bounds for these invariants, valid for an infinite set of primes $p$.
\end{abstract}

\maketitle


\section{Introduction}\label{sec:intro}

The aim of this note is to consider the following invariants of a finite group $G$:

\begin{itemize}
\item[(i)] $i(G)$, the number of isomorphism classes of non-identity proper subgroups $H$ of $G$;
\item[(ii)] $c(G)$,  the number of conjugacy classes of such subgroups $H$ of $G$;
\item[(iii)] $s(G)$,  the number of conjugacy classes of such self-normalising subgroups $H$ of $G$;
\item[(iv)] $n(G)$,  the number of conjugacy classes of such non-self-normalising subgroups $H$ of $G$.
\end{itemize}
A subgroup $H\le G$ is {\em self-normalising} if $H$ coincides with its normaliser $N_G(H)$. The motivation for this paper was a seminar by Mark Lewis~\cite{Lew} on $n(G)$; this was mainly about solvable groups, but it raised a question about non-abelian finite simple groups, answered in Theorems~\ref{thm:ifBHC} and \ref{thm:H-B}.

Using Dickson's description~\cite[Ch.~XII]{Dickson} of the subgroups of ${\rm PSL}_2(q)$, where $q$ is a prime power (see also~\cite[\S II.8]{Hup}) we will give formulae for these invariants for the groups $G={\rm PSL}_2(p)$ ($p$ prime). As a consequence, we will prove the following:
\
\begin{theo}\label{thm:bounds}
The groups $G={\rm PSL}_2(p)$, $p$ prime, satisfy
\begin{itemize}
\item[{\rm(i)}] $i(G)\ge 17$ for all $p\ge 29$;
\item[{\rm(ii)}] $c(G)\ge 18$ for all $p\ge 23$;
\item[{\rm(iii)}] $s(G)\ge 6$ for all $p\ge 41$;
\item[{\rm(iv)}] $n(G)\ge 12$ for all $p\ge 23$.
\end{itemize}
\end{theo}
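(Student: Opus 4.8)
The plan is to deduce Theorem~\ref{thm:bounds} from the closed formulae for $i(G)$, $c(G)$, $s(G)$ and $n(G)$ that one first extracts from Dickson's list. Writing $a=(p-1)/2$ and $b=(p+1)/2$, each of the four invariants comes out as $\alpha\,\tau(a')+\beta\,\tau(b')$ plus a bounded non-negative correction depending only on $p$ modulo $8$, $5$ and $4$, plus a constant; here $\tau$ is the divisor function, $\alpha,\beta$ are small positive integers, $a',b'$ are $a,b$ themselves (for $i$, $c$, $n$) or their odd parts (for $s$), the linear part counting the cyclic, dihedral and Borel-type subgroups carried by the two maximal tori and the unipotent radical, and the correction recording how many classes of $S_4$, $A_5$, $A_4$ and $V_4$ occur. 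Since the correction is non-negative, each desired inequality reduces to one of the form $\alpha\,\tau(a')+\beta\,\tau(b')\ge N$, to be verified for all $p$ above the relevant threshold; for instance $i(G)=3\tau(a)+2\tau(b)-3+\delta_8+\delta_5$, with $\delta_8,\delta_5\in\{0,1\}$ detecting $p\equiv\pm1\pmod 8$ and $p\equiv\pm1\pmod 5$, so one needs $3\tau(a)+2\tau(b)\ge 20$.

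The heart of the matter is an elementary lemma about $a$ and $b=a+1$: they are coprime consecutive integers, exactly one of them is even, and $24\mid p^2-1=4ab$ for every prime $p\ge 5$, so $\tau(a)\tau(b)=\tau(ab)$ is forced to be sizeable and, in particular, $\tau(a)+\tau(b)\ge 6$ for all $p\ge 11$. For each invariant there is then only a short list of ``bad'' pairs $(\tau(a'),\tau(b'))$ that violate the required inequality; in every bad case each of $a',b'$ must be a prime or a prime power of one of finitely many explicit shapes, and a congruence argument modulo $3$ — using that $3$ divides exactly one of $a,b$ while $p=2a+1$ is prime — then cuts the list of possible $p$ down to a handful, all of which turn out to lie below the threshold. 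Carried out for $i(G)$, this shows the bad primes are precisely $5,7,11,13,17,19,23$, whence $i(G)\ge 17$ for all $p\ge 29$; the analogous analyses produce the thresholds $23$, $41$, $23$ for $c$, $s$, $n$, the genuinely small primes below each threshold being settled by evaluating the formulae directly.

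For $c(G)$ and $n(G)$ the scheme runs unchanged once the formulae are in hand; for $s(G)$ one extra ingredient is a precise description of the self-normalising subgroups. These are: the Borel subgroup; $S_4$ and $A_5$ whenever $G$ contains them; $A_4$ exactly when $p\equiv\pm3\pmod 8$ (otherwise $A_4$ is contained in an $S_4$); and, for $d\ge 3$ dividing $a$ (resp.\ $b$), the dihedral subgroup of order $2d$, which is self-normalising precisely when $2d$ does not divide $a$ (resp.\ $b$) — because for $d\ge 3$ the cyclic subgroup of order $d$ is characteristic in a dihedral group of order $2d$, so its normaliser lies inside a torus normaliser, where it equals the dihedral group itself exactly under that divisibility condition. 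In particular all dihedral subgroups inside the odd-order torus contribute, which is why $s(G)$ depends on the divisor counts of the odd parts of $a$ and $b$; and $n(G)=c(G)-s(G)$ then follows from the formula for $c(G)$.

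The step I expect to be most delicate is the self-normalising count behind $s(G)$: the case $d=2$ must be excluded by hand (there $C_2$ is not characteristic in $V_4$ and $N_G(V_4)$ escapes the torus normaliser into an $A_4$ or $S_4$), the $2$-adic condition ``$2d\nmid a$'' has to be counted correctly against the divisor function, and the contributions of $A_4$ and $S_4$ must be combined correctly according to $p\bmod 8$. Beyond that, although each bad-pair set is finite, the congruence corrections cannot be dropped from the estimate: for residues of $p$ with $S_4$ or $A_5$ absent those corrections vanish, so the divisor bound alone must do the work there, and it is exactly the interplay between the arithmetic of $(p\pm1)/2$ and these residue conditions that pins the four cut-offs at $29,23,41,23$ rather than at anything smaller. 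No single estimate is hard; the difficulty lies in organising the finitely many cases so that none slips through.
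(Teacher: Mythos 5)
Your proposal is correct and follows essentially the same route as the paper: extract closed formulae for the four invariants from Dickson's subgroup list, exploit the fact that $(p\pm1)/2$ are coprime consecutive integers with one divisible by $2$ and one by $3$ to bound the divisor-count terms from below, and settle the finitely many small primes by direct evaluation (the paper organises the minimisation into four explicit cases according to which of $(p\pm1)/2$ carries the factors $2$ and $3$, which is your bad-pair/congruence-mod-$3$ analysis in different clothing). The one imprecision is that the formulae for $c(G)$ and $n(G)$ involve the $2$-adic valuations $k,l$ of $(p\pm1)/2$ through terms such as $\frac{k}{k+1}\tau(b)=\tau(b)-\tau\bigl(b_{\mathrm{odd}}\bigr)$, so they depend on the odd-part divisor counts as well as on $\tau(a)$ and $\tau(b)$, not on the latter alone; this does not disturb the strategy, since exactly one of $k,l$ is $0$ and the minimisation is carried out with the exact expressions.
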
 

The exceptions for small $p$ can be seen in Table~\ref{tab:small-p}, which lists the values of these invariants for primes $p=3,\ldots, 61$. A computer search reported in~\cite{JZ}, in relation to a different problem concerning these groups $G$, found a set of more than $1.2\times 10^6$ primes $p$ for which all four lower bounds in Theorem~\ref{thm:bounds} are attained. In fact, we make the following conjecture:

\begin{conj}\label{conj:infinitely-many}
There is an infinite set of primes $p$ for which the groups $G={\rm PSL}_2(p)$ satisfy $i(G)=17$, $c(G)=18$, $s(G)=6$ and $n(G)=12$.
\end{conj}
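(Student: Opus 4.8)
The plan is to combine the explicit formulae for $i(G)$, $c(G)$, $s(G)$ and $n(G)$ with the Bateman--Horn Conjecture. By Theorem~\ref{thm:bounds} these four quantities are bounded below by $17$, $18$, $6$ and $12$ for all large $p$, so it suffices to isolate an arithmetic profile of $p$ that forces simultaneous equality and to show, assuming Bateman--Horn, that infinitely many primes have that profile.

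From Dickson's classification, the four invariants are determined by the numbers $d(p-1)$ and $d(p+1)$ of divisors of $p\mp1$ (which control the cyclic, dihedral and Borel subgroups) together with the residue of $p$ modulo $8$ and modulo $5$ (which control whether the exceptional subgroups $S_4$ and $A_5$ occur). Equality in all four bounds requires $S_4$ and $A_5$ to be absent, i.e.\ $p\equiv\pm3\pmod8$ and $p\equiv\pm2\pmod5$, and requires $d(p-1)$ and $d(p+1)$ to be as small as possible. Here one meets a genuine constraint: for every prime $p>3$ exactly one of $p\pm1$ is divisible by $3$ and exactly one by $4$, so the two divisor counts cannot simultaneously reach their absolute minimum. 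For large $p$ the smallest jointly attainable values are $\{d(p-1),d(p+1)\}=\{6,8\}$, attained for instance when $(p-1)/6$ and $(p+1)/4$ are both prime. A convenient parametrisation is $p=360w+43$: then $p-1=6(60w+7)$ and $p+1=4(90w+11)$, while $p\equiv3\pmod8$ and $p\equiv3\pmod5$ automatically, so once $60w+7$, $90w+11$ and $360w+43$ are all prime one gets $d(p-1)=8$, $d(p+1)=6$ and no $S_4$ or $A_5$. I would then check directly from the formulae that this profile pins the invariants to exactly $i(G)=17$, $c(G)=18$, $s(G)=6$, $n(G)=12$.

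It remains to feed the three linear polynomials $f_1(w)=60w+7$, $f_2(w)=90w+11$, $f_3(w)=360w+43$ into the Bateman--Horn Conjecture. They are distinct, irreducible, and have positive leading coefficients, and the product $f_1f_2f_3$ has no fixed prime divisor: modulo $2$ all three values are odd; modulo $3$ they are congruent to $1$, $2$ and $1$; modulo $5$ to $2$, $1$ and $3$; and for $\ell\ge7$ a cubic has fewer than $\ell$ roots modulo $\ell$. The mod-$3$ check is the crucial one: a naive attempt to make $p$, $(p-1)/2$ and $(p+1)/4$ all prime produces a triple of forms that is identically divisible by $3$, which is precisely the manifestation of the constraint above, so the factor $3$ that always divides $p-1$ or $p+1$ must be built into the forms rather than fought against. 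Granted the non-degeneracy, Bateman--Horn predicts that the number of $w\le W$ with $f_1(w)$, $f_2(w)$, $f_3(w)$ all prime is asymptotic to $CW/(\log W)^3$ for an explicit constant $C>0$; in particular there are infinitely many such $w$, and hence infinitely many primes $p=360w+43$ with $i(G)=17$, $c(G)=18$, $s(G)=6$ and $n(G)=12$, proving the conjecture under Bateman--Horn.

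The main obstacle is not the analytic input but the reduction: extracting from the (somewhat intricate) formulae the precise conditions equivalent to equality in all four bounds, and, for this family, confirming that no accidental coincidence among subgroup orders merges isomorphism classes or alters fusion, so that the ``generic minimum'' is attained on the nose rather than merely approached. Once the right triple of forms has been identified the Bateman--Horn hypotheses are routine to verify, but identifying it is exactly where the arithmetic of $p\pm1$ modulo $3$ must be handled with care.
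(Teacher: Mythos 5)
Your proposal is correct and is essentially the paper's own argument: your profile $p=360w+43$ with $60w+7$, $90w+11$, $360w+43$ all prime is exactly the paper's case (b) (where $(p+1)/2=2s$ and $(p-1)/2=3r$ with $p$, $r$, $s$ prime, giving the triple $12t+7$, $3t+2$, $2t+1$) restricted to the sub-progression $t\equiv 3\pmod{30}$, which merely builds in the conditions $\sigma=\alpha=0$ that the paper instead deduces from the primality of $r$ and $s$. The verification of the Bateman--Horn hypotheses and the evaluation of the four formulae at this profile match the paper's computation.
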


A proof of this conjecture would depend on a certain triple of polynomials in ${\mathbb Z}[t]$ simultaneously taking prime values for infinitely many $t\in{\mathbb N}$. This sort of problem is the subject of the Bateman--Horn Conjecture (BHC), which gives estimates for the number of $t\le x$, for large $x$, giving prime values to a given finite set of polynomials. In all {\color{cyan}known} applications these estimates agree closely with data from computer searches, so there is strong evidence (but at present no proof) that the BHC is true. Our second main result is the following:

\begin{theo}\label{thm:ifBHC}
If the BHC is true then there is an infinite set of primes $p$ for which the lower bounds $17$, $18$, $6$ and $12$ in Theorem~\ref{thm:bounds} are attained.
\end{theo}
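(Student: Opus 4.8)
The plan is to combine the explicit formulae for $i(G),c(G),s(G),n(G)$ — established earlier in the paper from Dickson's list — with a prime--tuple conjecture. Those four invariants are functions only of the divisor counts $\tau\bigl(\tfrac{p-1}{2}\bigr)$ and $\tau\bigl(\tfrac{p+1}{2}\bigr)$ together with the residue of $p$ modulo $40$ (the latter controlling whether $S_4$ and $A_5$ occur as subgroups, and in how many conjugacy classes). So the first step is to read off from these formulae the precise arithmetic conditions on $p$ equivalent to having $i(G)=17$, $c(G)=18$, $s(G)=6$ and $n(G)=12$ \emph{simultaneously}, and then to produce an infinite set of primes $p$ meeting those conditions, conditional on the BHC.

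For the identification step: more subgroup types means more conjugacy classes, so to attain the minima one wants $S_4$ and $A_5$ \emph{not} to be subgroups, i.e. $p\equiv\pm3\pmod8$ and $p\not\equiv\pm1\pmod5$, and one wants $\tau\bigl(\tfrac{p-1}{2}\bigr)$ and $\tau\bigl(\tfrac{p+1}{2}\bigr)$ as small as the unavoidable constraints allow — namely that $2$ divides exactly one of $\tfrac{p\pm1}{2}$ and $3$ divides exactly one of them. Up to the symmetry $p\leftrightarrow -p$ this pins the minimal configuration down to
\[
p\equiv 3\pmod 8,\qquad \frac{p-1}{2}=3\ell,\qquad \frac{p+1}{2}=2r,
\]
with $\ell,r$ primes exceeding $3$ and $p\not\equiv\pm1\pmod5$. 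I would then verify, using the formulae (and in particular the facts that there is in this case a single conjugacy class of $A_4$, and that the dihedral subgroups of order $2r$ fall into two $G$-classes rather than one, being fused only in ${\rm PGL}_2(p)$), that this configuration yields exactly $(i,c,s,n)=(17,18,6,12)$.

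Now parametrise: writing $p=12t+7$ gives $\ell=2t+1$ and $r=3t+2$, and requiring $\ell,r,p$ all to be prime automatically forces $t$ odd (hence $p\equiv 3\pmod 8$) and $3\mid p-1$; the only extra requirement, $p\not\equiv\pm1\pmod5$, is the congruence $t\not\equiv 1,2\pmod 5$, which can be built in by restricting $t$ to a suitable residue class modulo a fixed integer $N$ and renaming the variable. What is left is a triple of linear polynomials in $\mathbb{Z}[t']$ whose simultaneous primality for infinitely many $t'$ is precisely a Bateman--Horn (equivalently Schinzel Hypothesis H) assertion. I would check the one hypothesis that requires attention — that the product of the three polynomials has no fixed prime divisor: divisibility by $2$ is excluded because all three are odd on the chosen progression, divisibility by $3$ is excluded because the forced factor of $3$ has been absorbed into $\tfrac{p-1}{2}=3\ell$ (so only $\ell$ and $p$ need be coprime to $3$, which holds on a suitable residue), and for every prime $\ge 5$ the at most three forbidden residues cannot exhaust $\ell\ge 5$ of them. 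The BHC then asserts a positive density — in particular infinitely many — of admissible $t'\le x$, and each produces a prime $p$ for which ${\rm PSL}_2(p)$ attains all four bounds.

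The main obstacle is the identification step, not the appeal to the BHC: extracting from the (rather involved) subgroup formulae the exact equivalence ``$p$ has the shape above $\iff$ all four minima are attained'' rests on several delicate points in the subgroup theory of ${\rm PSL}_2(p)$ — the exact number of conjugacy classes of $A_4$ when $p\equiv\pm3\pmod8$, the ${\rm PSL}$-versus-${\rm PGL}$ splitting of certain dihedral classes, and a careful census of which subgroups are self-normalising. A secondary but genuine point is that one cannot simply ask for $p$, $\tfrac{p-1}{2}$ and $\tfrac{p+1}{2}$ all to be prime: that triple has a fixed prime divisor (at $2$ and at $3$), which is exactly why the unavoidable small factors must be folded into the shapes $3\ell$ and $2r$ before the BHC can be invoked. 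Once the configuration is correctly identified the remainder is routine, since the BHC applied to a triple of linear polynomials with no fixed prime divisor directly gives a positive density of simultaneous prime values.
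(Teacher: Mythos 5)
Your proposal is correct and follows essentially the same route as the paper: it identifies the minimal configuration as $(p\pm1)/2$ being $2$ or $3$ times a prime (the paper's cases (a) and (b)), parametrises it by the linear triple $12t+7,\ 3t+2,\ 2t+1$, and invokes the BHC. The only cosmetic difference is that you impose $p\not\equiv\pm1\pmod 5$ by restricting $t$ to a residue class, whereas the paper notes this is automatic since $|G|=12psr$ with $p,s,r$ primes exceeding $5$ cannot be divisible by $5$.
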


In the absence of the BHC and any other unproved conjectures, we are able to use a result of Heath-Brown~\cite{BLS} to prove the following:

\begin{theo}\label{thm:H-B}
There is an infinite set of primes $p$ for which
the groups $G={\rm PSL}_2(p)$ satisfy
\begin{itemize}
 \item[{\rm(i)}] $i(G)\le 390$;
 \item[{\rm(ii)}] $c(G)\le 454$;
 \item[{\rm(iii)}] $s(G)\le 132$;
 \item[{\rm(iv)}] $n(G)\le 384$.
 \end{itemize}
\end{theo}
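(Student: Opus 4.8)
The plan is to combine the explicit formulae for $i(G)$, $c(G)$, $s(G)$ and $n(G)$ (to be derived from Dickson's subgroup classification) with a quantitative result of Heath-Brown on prime values of polynomials. The point is that each of the four invariants of $G = {\rm PSL}_2(p)$ is, for $p$ larger than a small bound, a function only of the residue class of $p$ modulo a few small moduli (determining whether $2,3,5$ divide $p\pm 1$, whether $-1$, $2$, $5$ are squares mod $p$, etc.) together with the number-theoretic quantities $\omega(p-1)$, $\omega(p+1)$, and the divisor-counting functions of $(p\pm 1)/2$. Thus, to get an \emph{infinite} family of primes on which all four invariants are simultaneously small, it suffices to exhibit an infinite set of primes $p$ for which $(p-1)/2$ and $(p+1)/2$ each have very few prime factors, while $p$ is pinned to prescribed residues modulo, say, $8$, $3$ and $5$.

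First I would record the formulae: express $i(G)$, $c(G)$, $s(G)$, $n(G)$ as sums whose only unbounded contributions come from the cyclic, dihedral and Borel (and their conjugates) families, so that each invariant is bounded above by an explicit expression in $d((p-1)/2)$, $d((p+1)/2)$, $\omega(p\pm 1)$ and a bounded constant. Next I would invoke Heath-Brown's theorem (via~\cite{BLS}) on the quantity $n^2+1$, or more precisely the result that there are infinitely many $n$ for which $n^2+1$ has at most two prime factors; applied with the observation that $p = n^2+1$ is then prime only trivially, one instead wants the companion statement guaranteeing infinitely many primes $p$ such that $(p-1)(p+1) = p^2-1$ has a bounded number of prime divisors. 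I would take such a sequence of primes, pass to an infinite subsequence on which $p$ is congruent to a fixed residue modulo $120$ (possible by pigeonhole, since there are only finitely many residue classes), and on that subsequence all the "residue-type" contributions to each invariant are constant while $\omega(p-1)$, $\omega(p+1)$, $d((p\pm1)/2)$ are bounded by absolute constants. Substituting the worst-case bounds into the four formulae yields the numerical bounds $390$, $454$, $132$, $384$.

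The main obstacle is getting the arithmetic bookkeeping exactly right so that the constants come out as stated rather than merely "some absolute constant". This requires: (a) extracting from the literature the precise form and the precise numerical constant in the Heath-Brown-type result on $p^2-1$ (equivalently, controlling $\Omega(p^2-1)$ or the number of distinct prime factors) — the cited reference~\cite{BLS} presumably packages this in usable shape; (b) converting a bound on the number of prime factors of $(p\pm1)/2$ into a bound on the divisor function $d((p\pm1)/2)$, which is immediate ($d \le 2^{\Omega}$) but whose exponent must be tracked carefully; and (c) verifying that on the chosen residue class modulo $120$ every sporadic or exceptional subgroup type ($A_4$, $S_4$, $A_5$) contributes its maximal possible amount, so that we are genuinely taking an upper bound. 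Once the formulae from the earlier sections are in hand, each of these is routine; the only real content imported from outside is Heath-Brown's estimate, and the rest is substitution and maximisation over the finitely many relevant residues.
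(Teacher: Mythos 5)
Your outline matches the paper's strategy---substitute a Heath-Brown-type bound on $\Omega(p\pm1)$ into the formulae (\ref{eq:i-formula})--(\ref{eq:n-formula}), using $\tau(n)\le 2^{\Omega(n)}$---but the proof as written has a genuine gap: everything that produces the specific constants $390$, $454$, $132$, $384$ is deferred to ``the cited reference presumably packages this in usable shape''. The actual input is that Heath-Brown (appendix to \cite{BLS}) gives an infinite set of primes $p\equiv 5$ mod~$(72)$ with $\Omega(p-1),\Omega(p+1)\le 8$ \emph{and} $\Omega(p-1)+\Omega(p+1)\le 11$. The congruence already forces $\sigma=0$, $k=0$, $l=1$, so your pigeonhole detour to a residue class mod~$120$ is unnecessary---and in fact harmful: taking the worst case over all residue classes would not reproduce the stated constants, since classes with $p\equiv\pm1$ mod~$(8)$ give $\sigma=1$ and allow larger $k$ or $l$, changing the coefficients of $\delta$ and $\varepsilon$ in (\ref{eq:c-formula})--(\ref{eq:n-formula}). (Your opening appeal to ``$n^2+1$ with at most two prime factors'' is also the wrong theorem; you correct the shape but never the constants.)

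The second missing idea is the optimisation that the joint constraint $\Omega(p-1)+\Omega(p+1)\le 11$ makes possible: putting seven prime factors into one of $(p\pm1)/2$ forces at most two into the other, so $\{\delta,\varepsilon\}\subseteq\{4,128\}$ in some order. Because $\varepsilon$ carries the larger coefficient in $i$, $c$ and $n$ while $\delta$ carries the larger coefficient in $s(G)=\delta+\tfrac12\varepsilon+2\alpha$, the worst-case allocation must be made \emph{separately} for $s(G)$ (seven factors in $(p+1)/2$, giving $128+2+2=132$) versus the other three invariants (seven factors in $(p-1)/2$). Your proposal treats all four invariants with a single ``worst-case substitution'' and would not yield $132$ for $s(G)$. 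Neither of these points is deep, but together they are the entire content of the proof beyond the formulae of Section~\ref{sec:subgroups}, so the proposal is a correct plan rather than a proof.
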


The existence of an infinite set of non-abelian finite simple groups $G$ for which $n(G)$ is bounded above answers a question raised by Lewis in~\cite{Lew}.
The large gaps between the lower and upper bounds in Theorems~\ref{thm:bounds} and \ref{thm:H-B} suggest that there is further work to be done in reducing these upper bounds without assuming the BHC. In the meantime, work is in progress on an analogue of Theorem~\ref{thm:bounds} for the groups ${\rm PSL}_2(q)$ where $q$ is any prime power; the powers of primes $p\ge 5$ are straightforward, whereas the cases $p=2$ and $3$ raise further interesting number-theoretic problems.


\section{Subgroups of ${\rm PSL}_2(p)$}\label{sec:subgroups}

By Dickson's description~\cite[Ch.~XII]{Dickson} of the subgroups of ${\rm PSL}_2(q)$, we have the following:

\begin{lemm}\label{lemma:isotypes}
For any prime $p>2$ the isomorphism types of non-identity proper subgroups of $G:={\rm PSL}_2(p)$ are as follows, where $\delta$ and $\varepsilon$ are the numbers of divisors $d$ and $e$ of $(p\pm1)/2$ respectively: .
\begin{enumerate}
\item ${\rm C}_d$ and ${\rm D}_d$ for the divisors $d\ne 1$ of $(p+1)/2$, giving $\delta-1$ types in either case;
\item  ${\rm C}_e$ and ${\rm D}_e$ for the divisors $e\ne 1$ of $(p-1)/2$, giving $\varepsilon-1$ types in either case; 
\item ${\rm C}_p\rtimes{\rm C}_e$ for the divisors $e$ of $(p-1)/2$, giving $\varepsilon$ types;
\item ${\rm A}_4$ if $p\ge 5$;
\item ${\rm S}_4$ if $p\equiv \pm 1$ {\rm mod}~$(8)$;
\item ${\rm A}_5$ if $p\equiv\pm 1$ {\rm mod}~$(5)$.
\end{enumerate}
\end{lemm}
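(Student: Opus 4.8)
The plan is to derive the lemma from Dickson's classification of the subgroups of ${\rm PSL}_2(q)$ in~\cite[Ch.~XII]{Dickson} (cross-checked against~\cite[\S II.8]{Hup}), specialised to the case $q=p$, i.e.\ $f=1$ where $q=p^f$. Dickson's list comprises four kinds of subgroup: the cyclic and dihedral subgroups of the two tori and their normalisers; the subgroups of a Borel subgroup; the ``sporadic'' subgroups ${\rm A}_4$, ${\rm S}_4$, ${\rm A}_5$; and the ``subfield'' subgroups ${\rm PSL}_2(p^m)$ and ${\rm PGL}_2(p^m)$. My first step is to eliminate the last kind: these require $m\mid f$ and $2m\mid f$ respectively, so when $f=1$ they yield only $G$ itself, which is not a proper subgroup. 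Hence no subfield subgroups occur, and the remaining three kinds will produce exactly the families (1)--(6).

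Next I would read off the ``linear'' part of the list and do the bookkeeping. Since a Sylow $p$-subgroup of $G$ has order $p$, a Borel subgroup is ${\rm C}_p\rtimes{\rm C}_{(p-1)/2}$, in which ${\rm C}_p$ is the unique (normal) Sylow $p$-subgroup; hence each of its subgroups is either a complement ${\rm C}_e$ with $e\mid(p-1)/2$ (already a torus subgroup) or a group ${\rm C}_p\rtimes{\rm C}_e$ containing ${\rm C}_p$, one for each $e\mid(p-1)/2$. The choice $e=1$ recovers ${\rm C}_p$ itself, so this yields the $\varepsilon$ types in~(3), pairwise non-isomorphic since they have distinct orders. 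The split and non-split tori are cyclic of orders $(p-1)/2$ and $(p+1)/2$, with dihedral normalisers of orders $p-1$ and $p+1$, and these contribute the groups ${\rm C}_d,{\rm D}_d$ ($d\mid(p+1)/2$) and ${\rm C}_e,{\rm D}_e$ ($e\mid(p-1)/2$), where ${\rm D}_k$ denotes the dihedral group of order $2k$. To obtain the stated counts I would use two elementary facts: $\gcd((p-1)/2,(p+1)/2)=1$, so the two sides share only the divisor~$1$; and ${\rm D}_1={\rm C}_2$ already appears on the cyclic list (on whichever of the two sides is even). Discarding the divisor~$1$ then leaves precisely $\delta-1$ types ${\rm C}_d$ and $\delta-1$ types ${\rm D}_d$ from $(p+1)/2$, and $\varepsilon-1$ of each from $(p-1)/2$, as in~(1)--(2); moreover every such $d$ or $e$ is less than $p$, so a glance at orders shows none of these groups is isomorphic to a member of family~(3).

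The only point needing more than bookkeeping is the sporadic part and its congruence conditions. Here I would recall that ${\rm A}_4\le G$ for every odd $p\ge5$, whereas ${\rm PSL}_2(3)\cong{\rm A}_4$ is the whole group and not a proper subgroup, which accounts for the hypothesis in~(4). For~(5), a Sylow $2$-subgroup of ${\rm S}_4$ has order~$8$, so ${\rm S}_4\le G$ forces $8\mid|G|=p(p^2-1)/2$; a short $2$-adic valuation computation shows that this holds exactly when $p\equiv\pm1\pmod8$, and by Dickson's theorem the condition is then also sufficient. Similarly $5\mid|{\rm A}_5|$, so ${\rm A}_5\le G$ forces $5\mid|G|$, i.e.\ $p\equiv0,\pm1\pmod5$; the value $p=5$ is excluded because then ${\rm PSL}_2(5)\cong{\rm A}_5$ is the whole group, while for $p\equiv\pm1\pmod5$ Dickson's classification supplies the embedding, giving~(6). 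Finally, as ${\rm A}_4$, ${\rm S}_4$, ${\rm A}_5$ have orders $12$, $24$, $60$ they are pairwise non-isomorphic, and a quick comparison of orders and structure shows that each is distinct from every type in~(1)--(3); collecting (1)--(6) then gives the complete list. The ``hard part'' here is thus purely organisational: keeping the three sources of subgroups disjoint and extracting the exact dependence of the counts on $\delta$, $\varepsilon$ and on the residues of $p$ modulo $8$ and $5$.
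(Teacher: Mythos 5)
Your proposal is correct and takes the same route as the paper, which simply invokes Dickson's classification of the subgroups of ${\rm PSL}_2(q)$ and reads off the list for $q=p$; you have merely written out the bookkeeping (discarding the subfield subgroups, folding ${\rm C}_p$ into the $e=1$ case of family (3), excluding $d=e=1$, and translating $16\mid p^2-1$ and $5\mid|G|$ into the congruences mod $8$ and $5$) that the paper leaves implicit. All of these verifications are accurate, so there is nothing to add.
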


We will assume that $p\ge 5$. Adding up the above numbers, we see that the total number $i(G)$ of such isomorphism types is given by
\begin{equation}\label{eq:i-formula}
i(G)=2\delta+3\varepsilon-3+\sigma+\alpha,
\end{equation}
where $\sigma=1$ or $0$ as $p\equiv\pm 1$ or $\pm 3$ mod~$(8)$, and $\alpha=1$ or $0$ as $p\equiv\pm 1$ mod~$(5)$ or not.

An integer $n=p_1^{e_1}\ldots p_k^{e_k}$ ($p_i$ distinct primes) has $\tau(n)=(e_1+1)\ldots(e_k+1)$ divisors. Since the average value of $\tau(n)$ is approximately $\log n$ (see, for instance, \cite[Theorem 6.30]{LeV}), if we choose the prime $p$ randomly, we should expect $(p\pm 1)/2$ each to have on average about $\log\,(p/2)$ divisors. It follows from de la Vall\'ee-Poussin's quantified form of Dirichlet's Theorem that $\sigma$ and $\alpha$ each have average value $1/2$, so on average we have
\[i(G)\approx 5\log p-2.\]
However, we aim to choose primes $p$ for which $(p\pm 1)/2$ each have a small number of divisors, so that $i(G)$ and the related invariants $c(G)$, $s(G)$ and $n(G)$ are as small as possible; more specifically, we want an infinite set of primes for which these invariants are uniformly bounded above. To see how small we can make these upper bounds, we will in the next section first look for lower bounds on these invariants. In order to do this, we will now find formulae analogous to (\ref{eq:i-formula}) for the remaining invariants.

By inspection of~\cite[Ch.~XII]{Dickson}, we have the following:

\begin{lemm}\label{lemma:2classes}
Each of the isomorphism types in Lemma~\ref{lemma:isotypes} is represented by a single conjugacy class in $G$, with the following exceptions, each represented by two classes:
\begin{itemize}
\item[(1)] ${\rm D}_d$ for those $d\ne 1$ dividing $(p+1)/2$ such that $(p+1)/2d$ is even;
\item[(2)] ${\rm D}_e$ for those $e\ne 1$ dividing $(p-1)/2$ such that $(p-1)/2e$ is even;
\item[(4)] ${\rm A}_4$ if $p\equiv\pm 1$ {\rm mod}~$(8)$;
\item[(5)] ${\rm S}_4$ if $p\equiv\pm 1$ {\rm mod}~$(8)$;
\item[(6)] ${\rm A}_5$ if $p\equiv\pm 1$ {\rm mod}~($5)$.
\end{itemize}
\end{lemm}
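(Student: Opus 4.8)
One could read this off Dickson's tables directly, but a cleaner route is available. The plan is to realise $G={\rm PSL}_2(p)$ as an index-$2$ normal subgroup of $\bar G:={\rm PGL}_2(p)$ and to apply the following elementary dichotomy: for $H\le G$, the entire $\bar G$-conjugacy class of $H$ consists of subgroups of $G$ (as $G\trianglelefteq\bar G$), and by the orbit--stabiliser theorem it is a single $G$-conjugacy class if $N_{\bar G}(H)\not\le G$, whereas it splits into exactly two $G$-conjugacy classes if $N_{\bar G}(H)\le G$. Thus, for each isomorphism type $H$ in Lemma~\ref{lemma:isotypes}, it suffices to check that (a) the subgroups of $G$ isomorphic to $H$ form a single $\bar G$-class, and (b) whether or not $N_{\bar G}(H)\le G$; part (b) then decides between one and two $G$-classes.

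For the cyclic types and the types ${\rm C}_p\rtimes{\rm C}_e$ both points are immediate and (b) always gives ``$\not\le G$'', so each contributes a single class and none is an exception. Indeed, a ${\rm C}_d$ with $d\ge 3$, $d\mid(p\mp1)/2$, lies in a unique maximal torus $\bar T$ of $\bar G$ of order $p\mp1$, all such tori being $\bar G$-conjugate; ${\rm C}_p$ is Sylow; and any ${\rm C}_p\rtimes{\rm C}_e$ contains a Sylow $p$-subgroup, so it lies in its normaliser, a Borel subgroup $\bar B$ of order $p(p-1)$, and is the unique subgroup of $\bar B$ of its order because $\bar B/{\rm C}_p$ is cyclic, while all $\bar B$ are $\bar G$-conjugate. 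Here $N_{\bar G}({\rm C}_d)=N_{\bar G}(\bar T)$ is dihedral of order $2(p\mp1)$, and $N_{\bar G}({\rm C}_p)=N_{\bar G}({\rm C}_p\rtimes{\rm C}_e)=\bar B$ (every subgroup of $\bar B$ containing ${\rm C}_p$ is normal in $\bar B$); in each case this normaliser has order twice that of its intersection with $G$, hence is not contained in $G$.

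The exceptions occur among the dihedral and exceptional types. For ${\rm D}_d$ of order $2d$ with $d\ge 3$, the index-$2$ subgroup ${\rm C}_d$ is characteristic and, since $\gcd((p-1)/2,(p+1)/2)=1$, has a definite torus type; a routine check (or a glance at Dickson's list) shows the ${\rm D}_d$'s of one type inside $G$ form a single $\bar G$-class, and characteristicity of ${\rm C}_d$ gives $N_{\bar G}({\rm D}_d)\le N_{\bar G}({\rm C}_d)=\bar N$, the relevant torus normaliser of $\bar G$. A short computation inside the dihedral group $\bar N$ shows $N_{\bar G}({\rm D}_d)$ is dihedral of order $4d$, generated by ${\rm D}_d$ together with an element $\bar c^{(p\mp1)/2d}$, where $\bar c$ generates the $\bar G$-torus of order $p\mp1$ containing ${\rm C}_d$; hence $N_{\bar G}({\rm D}_d)\le G$ exactly when $\bar c^{(p\mp1)/2d}\in G$, i.e.\ when $(p\mp1)/2d$ is even --- which is exactly the condition in parts~(1) and (2). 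The remaining case $d=2$, where ${\rm D}_2\cong{\rm C}_2\times{\rm C}_2$ and ${\rm C}_2$ is no longer characteristic, can be handled by the same bookkeeping, or more simply by observing that $N_{\bar G}({\rm D}_2)={\rm S}_4$, which lies in $G$ if and only if $p\equiv\pm 1$ {\rm mod}~$(8)$ --- precisely the $d=2$ instances of (1) and (2).

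Finally, for $H\in\{{\rm A}_4,{\rm S}_4,{\rm A}_5\}$: point (a), that $G$ contains a single $\bar G$-class of copies of $H$ whenever $H\le G$ at all, is classical and already contained in Dickson's analysis. For (b): $N_{\bar G}({\rm S}_4)={\rm S}_4$, as ${\rm Aut}({\rm S}_4)$ is inner; $N_{\bar G}({\rm A}_4)={\rm S}_4$, since the outer automorphism of ${\rm A}_4$ is realised inside ${\rm PGL}_2(p)$; and $N_{\bar G}({\rm A}_5)={\rm A}_5$, since ${\rm Aut}({\rm A}_5)\cong{\rm S}_5\cong{\rm PGL}_2(5)$ is not a subgroup of ${\rm PGL}_2(p)$ for $p\ne 5$. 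As ${\rm S}_4\le G$ exactly when $p\equiv\pm 1$ {\rm mod}~$(8)$, whereas ${\rm A}_5\le G$ whenever it occurs at all, (b) gives two $G$-classes in precisely the cases (4), (5) and (6). With the earlier paragraphs this exhausts Lemma~\ref{lemma:isotypes} and proves the statement. The main work, I expect, lies in the dihedral step: carefully tracking, inside the torus normaliser $\bar N$, which rotations and reflections lie in $G$ and which in $\bar G\setminus G$, so that the parity condition on $(p\mp 1)/2d$ comes out correctly, together with the neat disposal of the small case $d=2$; the assertions about ${\rm A}_4,{\rm S}_4,{\rm A}_5$ reduce to standard facts about ${\rm PSL}_2(p)$ and ${\rm PGL}_2(p)$ that can simply be quoted.
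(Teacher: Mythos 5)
Your proposal is correct, but it is worth saying that it supplies an argument where the paper supplies none: the paper simply asserts Lemma~2.2 ``by inspection of'' Dickson's Chapter~XII, i.e.\ it reads the class counts off the classification tables. Your route --- embed $G={\rm PSL}_2(p)$ as an index-$2$ normal subgroup of $\bar G={\rm PGL}_2(p)$ and use the criterion that a single $\bar G$-class of subgroups of $G$ splits into exactly two $G$-classes precisely when $N_{\bar G}(H)\le G$ --- is the standard structural explanation of \emph{why} the exceptional cases are exactly (1), (2), (4), (5), (6), and all of your normaliser computations check out: the torus-normaliser calculation correctly yields the parity condition on $(p\mp1)/2d$ for $d\ge 3$ (since $\bar G\cap\langle\bar c\rangle=\langle\bar c^2\rangle$), the case $d=2$ via $N_{\bar G}({\rm V}_4)\cong{\rm S}_4$ correctly reduces to $p\equiv\pm1$ mod~$(8)$ and agrees with the $d=2$ instance of the parity condition, and the statements $N_{\bar G}({\rm A}_4)=N_{\bar G}({\rm S}_4)={\rm S}_4$ and $N_{\bar G}({\rm A}_5)={\rm A}_5$ give (4)--(6). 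What your approach buys is a self-contained, checkable derivation that also explains the self-normalising analysis in Lemma~2.3; what it does not eliminate is the input (a), that each isomorphism type forms a single $\bar G$-class, which you still (reasonably) quote from Dickson, so the dependence on the classification is reduced but not removed. Two trivial loose ends you should tidy: the cyclic case $d=2$ falls outside your ``unique maximal torus'' argument (but all involutions of $G$ are conjugate and $N_{\bar G}({\rm C}_2)$ is dihedral of order $2(p\mp1)\not\le G$, so it is still one class), and for ${\rm A}_5$ one should note $p\ne 5$ there since ${\rm A}_5$ must be a \emph{proper} subgroup.
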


The proportion of the divisors $d$ or $e$ of $(p\pm 1)/2$ with an even quotient is $k/(k+1)$ or $l/(l+1)$ where $2^k$ and $2^l$ are the highest powers of $2$ dividing $(p\pm 1)/2$. Since $(p\pm 1)/2$ have opposite parity, either $k=0$ or $l=0$, as $p\equiv \pm 1$ mod~$(4)$. In just one of these two cases the divisor $1$ with even quotient must be excluded, so the number $c(G)$ of conjugacy classes of non-identity proper subgroups of $G$ is given by
\begin{equation}\label{eq:c-formula}
\begin{split}
c(G)&=i(G)+\frac{k}{k+1}\delta+\frac{l}{l+1}\varepsilon-1+2\sigma+\alpha\cr
&
=\left(2+\frac{k}{k+1}\right)\delta+\left(3+\frac{l}{l+1}\right)\varepsilon-4+3\sigma+2\alpha.
\end{split}
\end{equation}

\medskip
 
 A further inspection of~\cite[Ch.~XII]{Dickson} gives the following:
 
 \begin{lemm}\label{lemma:self-norm}
The only isomorphism types in Lemma~\ref{lemma:isotypes} which consist of self-normalising subgroups of $G$ are the following:
\begin{enumerate}
 \item ${\rm D}_d$ for divisors $d>2$ of $(p+1)/2$ with $(p+1)/2d$ odd, one class for each $d$ (a subgroup ${\rm D}_2={\rm V}_4$ is never self-normalising);
  \item ${\rm D}_e$ for divisors $e>2$ of $(p-1)/2$ with $(p-1)/2e$ odd, one class for each $e$;
 \item ${\rm C}_p\rtimes{\rm C}_{(p-1)/2}$, one class;
 \item ${\rm A}_4$ if $p\equiv\pm 3$ {\rm mod}~$(8)$, one class;
 \item ${\rm S}_4$ if $p\equiv\pm 1$ {\rm mod}~$(8)$, two classes;
 \item ${\rm A}_5$ if $p\equiv\pm 1$ {\rm mod}~$(5)$, two classes.
 \end{enumerate}
 \end{lemm}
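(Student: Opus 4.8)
The plan is to work through the isomorphism types listed in Lemma~\ref{lemma:isotypes} family by family, computing the normaliser $N_G(H)$ of a representative $H$ and comparing it with $H$; once the self-normalising types have been identified, the number of conjugacy classes of each is read off from Lemma~\ref{lemma:2classes}. Two general facts do most of the work. First, if $K$ is characteristic in $H$ then $N_G(H)\le N_G(K)$; this reduces the normaliser of a dihedral or a Borel subgroup to that of a cyclic one. Secondly, I use the standard structure of $G={\rm PSL}_2(p)$: the centraliser of a semisimple element of order greater than $2$ is the cyclic maximal torus containing it; the normalisers of the split and non-split tori ${\rm C}_{(p-1)/2}$ and ${\rm C}_{(p+1)/2}$ are the dihedral groups ${\rm D}_{(p-1)/2}$ and ${\rm D}_{(p+1)/2}$, of orders $p-1$ and $p+1$; ${\rm C}_p$ is normal in the Borel subgroup $B={\rm C}_p\rtimes{\rm C}_{(p-1)/2}$, which equals $N_G({\rm C}_p)$; the normaliser of a Klein four-subgroup of $G$ is ${\rm A}_4$ if $p\equiv\pm 3\pmod 8$ and ${\rm S}_4$ if $p\equiv\pm 1\pmod 8$; and ${\rm S}_4$ and ${\rm A}_5$ are maximal in $G$ whenever they occur.

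I would first dispose of the cyclic and dihedral subgroups. A cyclic ${\rm C}_m$ with $m\ge 3$ has centraliser equal to the torus $T$ containing it, so $N_G({\rm C}_m)=N_G(T)$, a dihedral group strictly larger than ${\rm C}_m$; and ${\rm C}_2$ is normal in a ${\rm V}_4$; so no cyclic subgroup is self-normalising. For a dihedral ${\rm D}_m$ with $m\ge 3$, its rotation subgroup is the unique cyclic subgroup of order $m$ and hence characteristic, so $N_G({\rm D}_m)\le N_G({\rm C}_m)={\rm D}_n$, where $n=(p+1)/2$ or $(p-1)/2$ according to which torus contains ${\rm C}_m$; thus $N_G({\rm D}_m)=N_{{\rm D}_n}({\rm D}_m)$, and an elementary computation in the dihedral group ${\rm D}_n$ shows that this equals ${\rm D}_m$ when $n/m$ is odd and ${\rm D}_{2m}$ when $n/m$ is even. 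Hence ${\rm D}_m$ is self-normalising precisely when $n/m$ is odd; combined with the class count of Lemma~\ref{lemma:2classes} (a single class in exactly the odd case), this gives cases~(1) and~(2). The one remaining dihedral type, ${\rm D}_2={\rm V}_4$, is handled separately: being normal in an ${\rm A}_4\le G$, it is never self-normalising --- which is the reason for the restrictions $d>2$ and $e>2$.

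For the Borel subgroups, ${\rm C}_p$ is the unique Sylow $p$-subgroup of ${\rm C}_p\rtimes{\rm C}_e$, hence characteristic, so $N_G({\rm C}_p\rtimes{\rm C}_e)\le N_G({\rm C}_p)=B$; since $B/{\rm C}_p$ is abelian, every subgroup of $B$ containing ${\rm C}_p$ is normal in $B$, so the normaliser is $B$, which coincides with ${\rm C}_p\rtimes{\rm C}_e$ only for $e=(p-1)/2$; by Lemma~\ref{lemma:2classes} this subgroup forms a single class --- case~(3). For ${\rm A}_4$, its derived subgroup is the normal ${\rm V}_4$, hence characteristic, so $N_G({\rm A}_4)\le N_G({\rm V}_4)$; since the latter is ${\rm A}_4$ when $p\equiv\pm 3\pmod 8$ and is ${\rm S}_4$ (with ${\rm A}_4$ normal of index $2$) when $p\equiv\pm 1\pmod 8$, we get $N_G({\rm A}_4)={\rm A}_4$ or ${\rm S}_4$ accordingly, so ${\rm A}_4$ is self-normalising exactly when $p\equiv\pm 3\pmod 8$, in which case Lemma~\ref{lemma:2classes} gives a single class --- case~(4). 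Finally, a subgroup of $G$ properly containing an ${\rm S}_4$ or an ${\rm A}_5$ would have order a proper multiple of $24$ or of $60$; this is impossible given the list of subgroup orders in Lemma~\ref{lemma:isotypes}, since a Borel subgroup meets any such subgroup trivially in its normal ${\rm C}_p$ and has cyclic quotient, a dihedral subgroup has only cyclic and dihedral subgroups, and $24\nmid 60$. Hence ${\rm S}_4$ and ${\rm A}_5$ are maximal, so self-normalising, with two classes each by Lemma~\ref{lemma:2classes} --- cases~(5) and~(6).

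The step I expect to require the most care is the determination of $N_G({\rm V}_4)$ in the two congruence classes of $p$ modulo $8$ --- concretely, that a Klein four-subgroup has normaliser of order $12$ when $p\equiv\pm 3\pmod 8$ and of order $24$ when $p\equiv\pm 1\pmod 8$. This is the one point at which the argument relies directly on Dickson's detailed description of the subgroups of ${\rm PSL}_2(p)$ rather than on generic finite group theory, and it is precisely what puts ${\rm A}_4$ on the list when $p\equiv\pm 3\pmod 8$ while keeping ${\rm A}_4$ (and ${\rm V}_4$) off it when $p\equiv\pm 1\pmod 8$.
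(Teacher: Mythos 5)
Your argument is correct, and it is considerably more explicit than what the paper offers: the paper simply asserts the lemma ``by a further inspection of Dickson'' and adds only the one-line remark that the subgroups in (3)--(6) and ${\rm D}_{(p\pm1)/2}$ are maximal while the remaining dihedral subgroups in (1), (2) are self-normalising inside ${\rm D}_{(p\pm1)/2}$. Your normaliser computations supply exactly the missing details, and they check out: the reduction $N_G(H)\le N_G(K)$ for $K$ characteristic in $H$ correctly funnels each dihedral subgroup ${\rm D}_m$ ($m\ge 3$) into the torus normaliser ${\rm D}_n$, and the elementary calculation $N_{{\rm D}_n}({\rm D}_m)={\rm D}_m$ or ${\rm D}_{2m}$ according as $n/m$ is odd or even is the real content of cases (1) and (2); it also dovetails with Lemma~\ref{lemma:2classes}, which assigns one class precisely in the odd case. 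The Borel case via the abelian quotient $B/{\rm C}_p$, the reduction of $N_G({\rm A}_4)$ to $N_G({\rm V}_4)$, and the maximality of ${\rm S}_4$ and ${\rm A}_5$ read off from the subgroup list in Lemma~\ref{lemma:isotypes} are all sound. You are also right to flag the determination of $N_G({\rm V}_4)$ (order $12$ or $24$ according as $p\equiv\pm3$ or $\pm1$ bmod $8$) as the one input that genuinely requires Dickson rather than generic arguments; the same dichotomy is what makes ${\rm A}_4$ appear in Lemma~\ref{lemma:2classes} with one or two classes. In short, your route is a self-contained proof of a statement the paper delegates to the literature; what the paper's terser treatment buys is brevity, while yours makes the lemma verifiable without consulting Dickson's chapter.
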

 
The subgroups of types (3), (4), (5) and (6) are maximal in $G$, as are the subgroups ${\rm D}_{(p\pm 1)/2}$ in (1) and (2); the remaining subgroups listed above  in (1) and (2) are self-normalising proper subgroups of ${\rm D}_{(p\pm 1)/2}$. One of $(p\pm 1)/2$ is odd, so each of its $\delta-1$ or $\varepsilon-1$ divisors $d\ne 1$ or $e\ne 1$ yields a class of self-normalising dihedral subgroups. For the even integer $(p\pm 1)/2$, the proportion of its divisors which have an odd quotient is $1/(k+1)$ or $1/(l+1)$; such divisors do not include $1$, and they include $2$ if and only if $k=1$ or $l=1$, that is, $p\equiv\pm 5$ mod~$(8)$, or equivalently $\sigma=0$. It follows that the total number of self-normalising classes in (1) and (2) can be written as
\[\frac{\delta}{k+1}+\frac{\varepsilon}{l+1}-2+\sigma.\]
(Recall that one of $k$ and $l$ is zero, as $p\pm 1)/2$ is odd.)  In addition, (3)  contributes one class, (4) and (5) together contribute $\sigma+1$, and (6) contributes $2\alpha$. Adding all these contributions we see that the total number $s(G)$ of conjugacy classes of self-normalising subgroups is given by
\begin{equation}\label{eq:s-formula}
s(G)=\frac{\delta}{k+1}+\frac{\varepsilon}{l+1}+2(\sigma+\alpha).
\end{equation}
It follows that the number $n(G)$ of conjugacy classes of non-self-normalising subgroups of $G$ is given by
\begin{equation}\label{eq:n-formula}
n(G) = c(G) - s(G)
= \left(2+\frac{k-1}{k+1}\right)\delta + \left(3+\frac{l-1}{l+1}\right)\varepsilon
-4+\sigma.
\end{equation}

\noindent{\bf Example 2.4}\, Let $p=37$. Then $(p+1)/2=19$, which is prime, so $\delta=2$, and $(p-1)/2=18=2\cdot 3^2$, so $\varepsilon=2\cdot 3=6$. We have $k=0$ and $l=1$. Since $p\equiv -3$ mod~$(8)$ and $p\equiv 2$ mod~$(5)$ we have $\sigma=\alpha=0$. Thus
\[i(G)=2\cdot2+3\cdot 6-3=19,\]
\[c(G)=2\cdot 2+\frac{7}{2}\cdot 6-4=21\]
with subgroups ${\rm D}_3$ and ${\rm D}_9$ each forming two conjugacy classes,
\[s(G)=\frac{2}{1}+\frac{6}{2}=5\]
represented by conjugacy classes of subgroups ${\rm D}_{19}$, ${\rm D}_6$, ${\rm D}_{18}$, ${\rm C}_{37}\rtimes{\rm C}_{18}$, and ${\rm A}_4$ (one each), and
\[n(G)=1\cdot 2+3\cdot 6-4=16\]
represented by conjugacy classes of subgroups ${\rm C}_{19}$, ${\rm C}_e$ ($e=2, 3, 6, 9, 18$), ${\rm D}_2$, ${\rm D}_3$ (two classes), ${\rm D}_9$ (two classes) and ${\rm C}_{37}\rtimes{\rm C}_e$ ($e=1, 2, 3, 6, 9$). Table~\ref{tab:small-p} gives similar data for the primes $p=3,\ldots, 61$.

\begin{table}[htbp]
\begin{center}
\begin{tabular}{c||c|c|c|c|c|c||c|c|c|c}
$p$ & $\delta$ & $\varepsilon$ & $k$ & $l$ & $\sigma$ & $\alpha$ & $i(G)$ & $c(G)$ & $s(G)$ & $n(G)$  \\
\hline
3    & 1 & 2 & 0 & 1 & 0 & 0 & 3   & 3   & 1 &  2 \\
5    & 2 & 2 & 0 & 1 & 0 & 0 & 7   & 7   & 3 &  4 \\
7    & 3 & 2 & 2 & 0 & 1 & 0 & 10 & 14 & 5 &  8 \\
11  & 4 & 2 & 1 & 0 & 0 & 1 & 12 & 14 & 6 &  8 \\
13  & 2 & 4 & 0	& 1 & 0 & 0 & 13 & 14 & 4 & 10	\\
17  & 3 & 4 & 0 & 3 & 1 & 0 & 16 & 20 & 6 & 14 \\
19  & 4 & 3 & 1	& 0 & 0 & 1 & 15 & 17 & 7 & 10	\\
23  & 6 & 2 & 2 & 0 & 1 & 0 & 16 & 21 & 6 & 15 \\
29  & 4 & 4 & 0 	& 1 & 0 & 1 & 18 & 20 & 8 & 12 \\
31  & 5 & 4 & 4 & 0 & 1 & 1 & 21 & 27 & 9 & 18	\\
37  & 2 & 6 & 0 	& 1 & 0 & 0 & 19 & 21 & 5 & 16 \\
41  & 4 & 6 & 0 & 2 & 1 & 1 & 25 & 31 & 10 & 21 \\
43  & 4 & 4 & 1 & 0 & 0 & 0 & 17 & 18 & 6 & 12 \\
47  & 8 & 2 & 3 & 0 & 1 & 0.& 20 & 27 & 6 & 21 \\
53  & 4 & 4 & 0 & 1 & 0 & 0 & 17 & 18 & 6 & 12 \\
59  & 8 & 2 & 1 & 0 & 0 & 1 & 20 & 24 & 8 & 16 \\
61  & 2 & 8 & 0 & 1 & 0 & 1 & 26 & 30 & 8 & 22
\end{tabular}
\end{center}
\vspace{2mm}
\caption{Invariants of ${\rm PSL}_2(p)$ for small primes $p$.}
\label{tab:small-p}
\end{table}


\section{Lower bounds for the invariants}

Our aim now is to find primes $p$ for which the invariants considered above for the groups $G={\rm PSL}_2(p)$ take the lowest possible values. In order to obtain uniform bounds we will assume that $p>37$; the smaller primes can be dealt with individually, as in the preceding example (see Table~\ref{tab:small-p}).

We start with $i(G)$, given by (\ref{eq:i-formula}). The first step is to ensure that $\sigma=\alpha=0$ by restricting attention to primes $p\equiv\pm 3$ or $\pm 13$ mod~$(40)$; Dirichlet's Theorem guarantees an infinite set of primes satisfying any one of these four congruences. It now remains to find a good upper bound on $2\delta+3\varepsilon$, and this can be done by minimising the numbers of primes dividing $(p\pm 1)/2$. These two consecutive integers are coprime, one of them must be divisible by $2$, and one of them by $3$, giving four possibilities for allocating these prime factors. They are as follows, with inequalities needed to produce primes $p>37$:

\begin{itemize}
\item[(a)] $(p+1)/2=3s$ and $(p-1)/2=2r$ coprime, with $s\ge 7$ and $r\ge 10$,
\item[(b)] $(p+1)/2=2s$ and $(p-1)/2=3r$ coprime, with $s\ge 10$ and $r\ge 7$,
\item[(c)] $(p+1)/2=6s$ and $(p-1)/2=r$ coprime, with $s\ge 4$ and $r\ge 19$,
\item[(d)] $(p+1)/2=s$ and $(p-1)/2=6r$ coprime, with $s\ge 20$ and $r\ge 4$,
\end{itemize}
where $|G|=12psr$ in all cases. (Remarkably, these four cases also arose recently in~\cite{JZ}, which addressed the problem of constructing a possibly infinite set of finite simple groups of order a product of six primes.)

If in (a) or (b) we take $s$ and $r$ to be primes satisfying the specified inequalities and giving a prime value for $p$, then $\delta=\varepsilon=4$ and so $2\delta+3\varepsilon=20$; moreover, $|G|$ is not divisible by $8$ or by $5$, so $\sigma=\alpha=0$ and thus $i(G)=17$. This value can also be obtained in (a) by taking $s=9$ and $r=13$, so that $p=53$, but it is not hard to see that no lower value of $i(G)$ can be obtained in cases (a) to (d). Thus
\begin{equation}\label{eq:i-bound}
i(G)\ge 17
\end{equation}
for all $G$ with $p>37$, with equality whenever conditions (a) or (b) are satisfied by primes $p$, $s$ and $r$, and also when $p=53$. In fact, Table~\ref{tab:small-p} shows that this inequality is satisfied for all $p\ge 29$, but not for any smaller primes. This proves Theorem~\ref{thm:bounds}(i).

A similar argument can be applied to $c(G)$, given by (\ref{eq:c-formula}), but now we need to minimise
\[\left(2+\frac{k}{k+1}\right)\delta+\left(3+\frac{l}{l+1}\right)\varepsilon
=\left(3-\frac{1}{k+1}\right)\delta+\left(4-\frac{1}{l+1}\right)\varepsilon,\]
where $k$ and $l$ take the values $0$ and $1$ in either order. Again, cases (a) and (b) achieve the minimum value for this expression, namely $22$, so that
\begin{equation}\label{eq:c-bound}
c(G)\ge 18
\end{equation}
for all $G$ with $p>37$, with the same conditions for equality as in (\ref{eq:i-bound}). In this case, Table~\ref{tab:small-p} shows that the bound is in fact satisfied for all $p\ge 23$, but not for $p=19$. This proves Theorem~\ref{thm:bounds}(ii).

To minimise $s(G)$, given by (\ref{eq:s-formula}), we need to minimise
\[\frac{\delta}{k+1}+\frac{\varepsilon}{l+1},\]
where again $\{k, l\}=\{0, 1\}$. This is achieved in all four cases (a) to (d) whenever one can take $s$ and $r$ prime. The resulting value is $6$, so that
\begin{equation}\label{eq:s-bound}
s(G)\ge 6
\end{equation}
for all $G$ with $p>37$, attained in cases (a) to (d), and thus proving Theorem~\ref{thm:bounds}(iii). However, in Example~2.4, where $p=37$, we have $s(G)=5$: the reason is that $(p-1)/2=18=2\cdot 3^2$ has a repeated prime factor, so that $\varepsilon =6$ rather than $8$ which would be the case for three distinct prime factors. This explains why we have assumed here that $p>37$.

Finally, to minimise $n(G)$ we need to minimise
\[\left(3-\frac{2}{k+1}\right)\delta+\left(4-\frac{2}{l+1}\right)\varepsilon.\]
Once again, cases (a) and (b) give the least value of this expression, namely $16$, so that
\begin{equation}\label{eq:n-bouind}
n(G)\ge 16-4=12
\end{equation}
for all $G$ such that $p>37$, with equality as in (a) and (b). In fact, Table~\ref{tab:small-p} shows that this bound is satisfied for all $p\ge 23$. This proves Theorem~\ref{thm:bounds}(iv).


\section{Attaining the bounds}

We have seen instances of groups $G$ which attain the four lower bounds obtained in the preceding section, namely all the groups in cases (a) and (b) involving triples of primes $p$, $r$ and $s$. The smallest such group in case (a) is ${\rm PSL}_2(29)$; however, this has $\alpha=1$ since its order is divisible by $s=5$, so it does not satisfy all the bounds. The next example in case (a) is ${\rm PSL}_2(173)$, with $r=43$ and $s=29$, and this has $\alpha=0$, as required. The first example in case (b) is ${\rm PSL}_2(43)$, with $r=7$ and  $s=11$. In fact, in each of cases (a) to (d) the computer searches reported in~\cite{JZ} found over $600,000$ instances of suitable triples of primes, all but a few having $\sigma=\alpha=0$, so that those in cases (a) and (b) attain equality in our bounds.

Ideally, we would like infinite sets of examples attaining these bounds.
In case~(a) the equations
\[(p+1)/2=3s\quad\hbox{and}\quad (p-1)/2=2r\]
imply that $p\equiv 5$ mod~$(12)$, so we can write $p=12t+5$ for some $t\in{\mathbb N}$. Then
\[s=(p+1)/6=2t+1\quad\hbox{and}\quad r=(p-1)/4=3t+1.\]
We require $p$, $s$ and $r$ all to be prime, so we need the the polynomials
\begin{equation}\label{eq:a-polys}
f_i(t)=12t+5,\quad 3t+1\quad\hbox{and}\quad 2t+1,
\end{equation}
representing them all to take prime values for infinitely many $t\in{\mathbb N}$. 

The situation is similar in cases (b), (c) and (d): for example, in case~(b) we need the polynomials 
\begin{equation}\label{eq:b-polys}
f_i(t)=12t+7,\quad 3t+2\quad \hbox{and}\quad 2t+1.
\end{equation}
to take prime values for infinitely many $t\in{\mathbb N}$. This type of problem is the subject-matter of Schinzel's Hypothesis and the Bateman--Horn Conjecture, considered in the next section.


\section{The Bateman--Horn Conjecture}

Given a finite set of distinct polynomials $f_i(t)\in{\mathbb Z}[t]$ ($i=1,\ldots, k$), it is of interest to know whether they simultaneously take prime values for infinitely many $t\in{\mathbb N}$.
Instances of this general problem include the following:
\begin{itemize}
\item the twin primes problem, with $f_1(t)=t$ and $f_2(t)=t+2$;
\item the Sophie Germain primes problem, with $f_1(t)=t$ and $f_2(t)=2t+1$;
\item the Euler--Landau problem, with a single polynomial $f_1(t)=t^2+1$.
\end{itemize}
The following are obvious necessary conditions on the polynomials $f_i(t)$:
\begin{itemize}
\item they all have positive leading coefficients;
\item they are all irreducible;
\item their product $f(t)=f_1(t)\ldots f_k(t)$ is not identically zero modulo any prime.
\end{itemize}
Schinzel's Hypothesis~\cite{SS}, which we will abbreviate to SH, asserts that these conditions are also sufficient. This generalises earlier conjectures by Bunyakovsky~\cite{Bun} for $k=1$ and by Dickson~\cite{Dickson04} where $\deg f_i=1$ for all $i$. It has been proved only in the simplest case, where $k=1$ and $\deg f_1=1$: this is Dirichlet's Theorem on primes in an arithmetic progression $at+b$.

Building on earlier work by Hardy and Littlewood~\cite{HL} on some special cases, the Bateman--Horn Conjecture (BHC) quantifies SH by giving an estimate $E(x)$ for the number $Q(x)$ of $t\le x$ for which the values $f_i(t)$ are all prime. It asserts that
\[Q(x)\sim E(x)\quad\hbox{as}\quad x\to\infty\]
where
\begin{equation}\label{eq:BHC-est}
E(x):=C\int_a^{+\infty}\negthickspace\frac{dt}{\prod_i\ln f_i(t)};
\end{equation}
here $C$ is the {\sl Hardy--Littlewood constant}
\begin{equation}\label{eq:C-defn}
C=C(f_1,\ldots, f_k):=\prod_{p\;{\rm prime}}\left(1-\frac{1}{p}\right)^{-k}\left(1-\frac{\omega_f(p)}{p}\right)
\end{equation}
where $\omega_f(p)$ is the number of distinct roots of the polynomial $f=f_1\ldots f_k$ mod~$(p)$, and $a$ is chosen so that the integral in (\ref{eq:BHC-est}) avoids any singularities, where $\ln f_i(t)=0$ for some $i$.

One can regard the integral in (\ref{eq:BHC-est}) as a first attempt at an estimate for $Q(x)$, based on applying the Prime Number Theorem to the polynomials $f_i(t)$, on the (incorrect) assumption that they take values independently. The constant $C$ in (\ref{eq:C-defn}) can then be regarded as a product of correction factors, one for each prime $p$, replacing the probability $(1-\frac{1}{p})^k$ that $k$ randomly and independently chosen elements of ${\mathbb Z}_p$ are all non-zero with the proportion of ${\mathbb Z}_p$ where $f(t)\ne 0$.

When the three conditions of SH are satisfied the infinite product in (\ref{eq:C-defn}) converges to a limit $C>0$. Convergence is slow, but good approximations can be found by taking partial products over large initial segments of the primes. On a laptop this can take several hours, but by contrast Maple can accurately evaluate the integral in (\ref{eq:BHC-est}) almost instantly, by numerical integration.

Like SH, the BHC has been proved only in the simplest case, where $k=1$ and $\deg f_1=1$. This is de la Vall\'ee-Poussin's quantified form of Dirichlet's Theorem, that the primes $at+b$ are asymptotically evenly distributed among the $\varphi(a)$ congruence classes of units mod~$(a)$.

If the conditions of SH are satisfied then $E(x)\to+\infty$ as $x\to\infty$. The BHC asserts that $Q(x)\sim E(x)$ as $x\to\infty$, so if the BHC is correct then $Q(x)\to+\infty$ also, and hence there are infinitely many $t\in{\mathbb N}$ for which each $f_i(t)$ is prime. Thus the BHC implies SH. The fact that the BHC has given impressively accurate estimates in all known applications is strong evidence that it is true, but at present there seems to be little prospect of a proof.

There is an interesting account in~\cite{AFG} of the history of the BHC, including a proof of the convergence of the product (\ref{eq:C-defn}). There is a shorter discussion of the BHC in~\cite{JZ}, including several applications to other problems in group theory, some involving the groups ${\rm PSL}_2(p)$.

Returning to our specific problem, the polynomials $f_i(t)$ in case~(a), shown in (\ref{eq:a-polys}), satisfy the common conditions of SH and the BHC, as do those in case~(b) shown in (\ref{eq:b-polys}), so if the BHC is true then in either case there are infinitely many $t\in{\mathbb N}$ with each $f_i(t)$ prime, as required. This gives a proof of Theorem~\ref{thm:ifBHC} and hence (assuming the BHC) of Conjecture~\ref{conj:infinitely-many}.

\medskip

\noindent{\bf Example 5.1}\, As in~\cite{JZ}, where cases (a) to (d) arose in connection with a different problem concerning the groups ${\rm PSL}_2(p)$, let  $x=10^9$. A computer search reported there, using the primality test within Maple, shows that $Q_a(10^9)=614\,423$. Evaluating $E_a(10^9)$ by Maple, using numerical integration, gives an estimate $615\,580.7$, which has a relative error of $+0.188\%$. Similarly, in case~(b) we have $Q_b(10^9)=615\,369$ and $E_b(10^9)=615\,580.6$, an error of $+0.034\%$.


\section{Applying Heath-Brown's result}

What can be proved without assuming any unproved conjectures? Heath-Brown, in an appendix to~\cite{BLS}, has proved that there is an infinite set $\mathcal P$ of primes $p\equiv 5$ mod~$(72)$ such that
\[\Omega(p-1)+\Omega(p+1)\le 11\quad\hbox{and}\quad \Omega(p-1), \Omega(p+1)\le 8,\]
where $\Omega(n)$ is the number $\sum_{i=1}^ke_i$ of primes dividing an integer $n=\prod_{i=1}^kp_i^{e_i}$, counting repetitions.
Note that if $p\in{\mathcal P}$ then $(p-1)/2$ is twice odd and $(p+1)/2$ is odd.

Let ${\mathcal G}=\{G={\rm PSL}_2(p)\mid p\in\mathcal P\}$. If $G\in\mathcal G$ then since $p\equiv 5$ mod~$(8)$ we have $\sigma=0$, so (\ref{eq:i-formula}) becomes
\[i(G)=2\delta+3\varepsilon-3+\alpha.\]
An upper bound (not necessarily attained) on $i(G)$ for $G\in{\mathcal G}$ is therefore obtained by taking $(p+1)/2$ and $(p-1)/2$ to be products of two and seven distinct primes, including $5$ in one case, so that $\delta=2^2=4$, $\varepsilon=2^7=128$ and $\alpha=1$.
This gives an upper bound
\[i(G)\le 2\times 4+3\times 128-3+1=390\]
for all $G\in{\mathcal G}$.

Since $p\equiv 5$ mod~$(8)$ we have $k=0$ and $l=1$, so (\ref{eq:c-formula}) becomes
\[c(G)=2\delta+\frac{7}{2}\varepsilon-4+2\alpha.\]
The same assumptions as before on the primes dividing $(p\pm 1)/2$ therefore give an upper bound
\[c(G)\le 2\times 4+\frac{5}{2}\times 128-4+2=454\]
for all $G\in{\mathcal G}$.
Since $k=0$, $l=1$ and $\sigma=0$, (\ref{eq:s-formula}) becomes
\[s(G)=\delta+\frac{1}{2}\varepsilon+2\alpha.\]
With this changed weighting for $\delta$ and $\varepsilon$, an upper bound for $s(G)$ is now  obtained by assigning seven distinct prime factors to $(p+1)/2$ and two to $(p-1)/2$, again including $5$, so that $\delta=128$, $\varepsilon=4$ and $\alpha=1$, giving an upper bound
\[s(G)\le 128+\frac{1}{2}\times 4+2=132\]
for all $G\in{\mathcal G}$.
Finally (\ref{eq:n-formula}) becomes
\[n(G)=\delta+3\varepsilon-4,\]
so the same assignment of prime factors as for $i(G)$ and $c(G)$ gives an upper bound
\[n(G)\le 4+3\times 128-4=384\]
for all $G\in{\mathcal G}$.
This completes the proof of Theorem~\ref{thm:H-B}.


\bigskip

\centerline{\bf Acknowledgments}

\medskip

The author is grateful to Mark Lewis for the seminar which inspired this work, to Natalia Maslova for organising the workshop in which it took place, and to Alexander Zvonkin for some very helpful remarks on this paper and for the computations reported in~\cite{JZ} and used again here.


\end{document}